\newcommand\org@hypertarget{}
\let\org@hypertarget\hypertarget
\renewcommand\hypertarget[2]{%
  \Hy@raisedlink{\org@hypertarget{#1}{}}#2%
}
\theoremstyle{definition}
\newtheorem{definition}{Definition}[section]
\theoremstyle{plain}
\newtheorem{theorem}[definition]{Theorem}
\newtheorem{proposition}[definition]{Proposition}
\newtheorem{corollary}[definition]{Corollary}
\numberwithin{equation}{section}
\title{The Similarity Problem for $\mathcal{Z}$-stable $\mathrm{C}^{*}$-algebras}
\author{Miroslava Johanesov\'a}
\address{School of Mathematical Sciences, University of Nottingham, University Park, \indent Nottingham, NG7 2RD, United Kingdom}
\email{pmxmj1@nottingham.ac.uk}
\author{Wilhelm Winter}
\email{wilhelm.winter@nottingham.ac.uk}
\thanks{The second named author is partially supported by EPSRC First Grant EP/G014019/1.}
\subjclass[2000]{46L05}
\keywords{Similarity problem, Jiang-Su algebra, $\mathcal{Z}$-stability}
\date{December 10, 2010}
\begin{document}

\maketitle

\begin{abstract}
We show that the tensor product of two unital C$^{*}$-algebras, one of which is nuclear and admits a unital $^{*}$-homomorphism from (the building blocks of) the Jiang-Su algebra, has Kadison's similarity property. As a consequence, we obtain that a unital C$^{*}$-algebra which absorbs the Jiang-Su algebra tensorially also has this property.
\end{abstract}

\section{Introduction}

Given a C$^{*}$-algebra $\mathcal{A}$ and a $^{*}$-homomorphism $\rho\colon\mathcal{A}\rightarrow\mathcal{B}(\mathcal{H})$ ($\mathcal{H}$ a Hilbert space), the canonical way of constructing a (non-selfadjoint) homomorphism $\pi\colon\mathcal{A}\rightarrow\mathcal{B}(\mathcal{H})$ is by setting $\pi(\cdot):=S^{-1}\rho(\cdot)S$, where $S\colon\mathcal{H}\rightarrow\mathcal{H}$ is a bounded invertible operator. Moreover, since $\|\pi\|\leq\|S\|\|S^{-1}\|$, $\pi$ is necessarily bounded. R. Kadison in \cite{Ka} asked the natural question of whether all bounded homomorphisms are of this form. A C$^{*}$-algebra $\mathcal{A}$ is said to have  the \emph{similarity property} if every bounded homomorphism $\pi\colon\mathcal{A}\rightarrow\mathcal{B}(\mathcal{H})$ is similar to a $^{*}$-homomorphism, that is, if there exists an invertible operator $S\in\mathcal{B}(\mathcal{H})$ such that $S\pi(\cdot)S^{-1}$ is a $^{*}$-homomorphism. Then the question posed by Kadison is equivalent to asking whether every C$^{*}$-algebra has the similarity property.\\
\indent Kadison's question in its full generality remains open but several partial results are known. In particular, U. Haagerup \cite{Ha} proved  that a bounded homomorphism $\pi$ is similar to a $^{*}$-homomorphism if and only if it is completely bounded. Recall that by definition
\begin{equation}\label{eq:cb}
\|\pi\|_{\mathrm{cb}}:=\sup_{n\in\mathbb{N}}\|\pi^{(n)}\|,
\end{equation}
where $\pi^{(n)}\colon M_{n}(\mathcal{A})\rightarrow M_{n}(\mathcal{B}(\mathcal{H}))$ is given by $\pi^{(n)}:=\mathrm{id}_{M_{n}}\otimes\pi$ and we call $\pi$ \emph{completely bounded} if $\|\pi\|_{\mathrm{cb}}$ is finite. Moreover, Haagerup showed that if $\pi$ is completely bounded, then
\begin{eqnarray*}
\|\pi\|_{\mathrm{cb}}=\inf\{\|S\|\|S^{-1}\| & : & S\in\mathcal{B}(\mathcal{H})\ \text{invertible such that} \\
&& \ S\pi(\cdot)S^{-1}\ \text{is a $^{*}$-homomorphism}\}
\end{eqnarray*}
and the infimum is attained.\\
\indent In \cite{Pi1}, G. Pisier proved that if $\mathcal{A}$ has the similarity property, then there are
  a number $\alpha$ and a constant $C$ (depending on $\alpha$) such that any bounded homomorphism $\pi\colon\mathcal{A}\rightarrow\mathcal{B}(\mathcal{H})$ satisfies
\begin{equation*}
\|\pi\|_{\mathrm{cb}}\leq C\|\pi\|^{\alpha}.
\end{equation*}
Moreover, the smallest $\alpha$ with this property exists and is an integer; it is called the \emph{similarity degree of $\mathcal{A}$}, denoted $d(\mathcal{A})$. It now follows that a C$^{*}$-algebra has the similarity property if and only if its similarity degree is finite.\\
\indent Although Kadison's problem is still open, the similarity degrees of some specific classes of C$^{*}$-algebras are already known. Some of the most important of these we list below, together with their similarity degrees (or the upper bounds thereof) in the case of infinite dimensional C$^{*}$-algebras. It is, of course, well known that for the finite dimensional case, the similarity degree is 1.
\begin{itemize}
 \item[(i)] $\mathcal{A}$ is nuclear if and only if $d(\mathcal{A})= 2$ (\cite{Bu}, \cite{Ch3}, \cite{Pi4}).
 \item[(ii)] If $\mathcal{A}=\mathcal{B(H)}$, then $d(\mathcal{A})=3$ (\cite{Ha}, \cite{Pi1}).
 \item[(iii)] If $\mathcal{A}$ is unital, then $d(\mathcal{A}\otimes\mathcal{K}(\mathcal{H}))\leq 3$ (\cite{Ha}, \cite{Pi3}).
 \item[(iv)] If $\mathcal{A}$ is a $II_{1}$ factor with property $\Gamma$, then $d(\mathcal{A})=3$ (\cite{Ch2}, \cite{Pi3}).
 \item[(v)] If $\mathcal{A}$ and $\mathcal{B}$ are unital and $\mathcal{B}$ contains unitally a matrix algebra of any order, then $d(\mathcal{A}\otimes\mathcal{B})\leq 5$ (\cite{Po}).
\item[(vi)] If $\mathcal{A}$ is unital, separable and approximately divisible, then $d(\mathcal{A})\leq 5$ (\cite{LiSh}).
\end{itemize}

\indent In this paper we show that if $\mathcal{A}$ and $\mathcal{B}$ are separable unital C$^{*}$-algebras, such that $\mathcal{B}$ is nuclear and admits a unital $^{*}$-homomorphism from (the building blocks of) the Jiang-Su algebra $\mathcal{Z}$ (we refer the reader to \cite{JiSu} and \cite{RoWi} for the original construction and various characterisations of the Jiang-Su algebra), then $d(\mathcal{A}\otimes\mathcal{B})\leq 5$. This generalises the result obtained by F. Pop in \cite{Po}, outlined in (v) above. We note that the unital embeddability of the Jiang-Su algebra cannot be automatically assumed. In fact, there is an example in \cite{DaHiToWi} of a unital, simple, nuclear, infinite dimensional C$^{*}$-algebra which does not admit such an embedding.\\
\indent As a corollary, we obtain that the similarity degree of a separable unital C$^{*}$-algebra which absorbs $\mathcal{Z}$ tensorially (a so called $\mathcal{Z}$-stable C$^{*}$-algebra) is at most 5 and thus a unital $\mathcal{Z}$-stable C$^{*}$-algebra has Kadison's similarity property. Since separable approximately divisible C$^{*}$-algebras are $\mathcal{Z}$-stable \cite{ToWi}, this generalises  the result of W. Li and J. Shen \cite{LiSh} mentioned in (vi) above.

\indent The Jiang-Su algebra and the notion of $\mathcal{Z}$-stability have come to the fore of the classification and structure theory of separable nuclear C$^{*}$-algebras in recent years; in this context, $\mathcal{Z}$-stability is closely related to finite topological dimension and often ensures classifiability, cf.\ \cite{Winter:dr-Z-stable}. While it is easy to force a separable C$^{*}$-algebra to be $\mathcal{Z}$-stable (any one of the form $\mathcal{A} \otimes \mathcal{Z}$ will do), at this point little is known about the meaning of $\mathcal{Z}$-stability for arbitrary separable C$^{*}$-algebras, and how often it occurs. For example, it is an open question whether the reduced C$^{*}$-algebra of the free group with two generators is $\mathcal{Z}$-stable; this does not seem entirely unlikely as $\mathrm{C}^{*}(\mathbb{F}_{2})$ has many central sequences, cf.\ \cite{Phi:central-sequences}. It is also open whether $\mathrm{C}^{*}_{r}(\mathbb{F}_{2})$ has the similarity property, and it is an interesting possibility that both questions are equivalent. Of course, in this note we do not make  substantial progress on either of these problems, but at least our result says that a positive answer to the first one would imply the second. We regard our paper as a first attempt to exploit $\mathcal{Z}$-stability also outside the realm of nuclear C$^{*}$-algebras.

\section{Notation and Preliminaries}

In this section we introduce some notation, and state some preliminary results about $C(X)$-algebras and completely bounded maps.

For each natural number $n$, $M_{n}$ will denote the C$^{*}$-algebra of $n\times n$ matrices over the complex numbers with unit $1_{M_{n}}$. If $\mathcal{A}$ is a C$^{*}$-algebra, $M_{n}(\mathcal{A})$ will denote the C$^{*}$-algebra of $n\times n$ matrices with elements in $\mathcal{A}$ and we identify $M_{n}(\mathcal{A})$ with $M_{n}\otimes\mathcal{A}$ in the usual way. For a pair of C$^{*}$-algebras $\mathcal{A}$ and $\mathcal{B}$, we denote by $\mathcal{A}\odot\mathcal{B}$, $\mathcal{A}\otimes\mathcal{B}$ and $\mathcal{A}\otimes_{\mathrm{max}}\mathcal{B}$ their algebraic, minimal and maximal tensor product respectively.

For natural numbers $p$ and $q$, we set
\begin{equation*}
Z_{p,q}:=\{f\in C([0,1], M_{p}\otimes M_{q}):f(0)\in M_{p}\otimes 1_{M_{q}}, f(1)\in 1_{M_{p}}\otimes M_{q}\},
\end{equation*}
to be the so-called \emph{dimension drop $\mathrm{C}^{*}$-algebra}. If $p$ and $q$ are relatively prime, then $Z_{p,q}$ is said to be \emph{prime}.

The concept of $C_{0}(X)$-algebras, which may be considered as generalised fields of C$^{*}$-algebras parametrised by a locally compact space $X$, was introduced by G. Kasparov in \cite{Kas}. In this paper, we only require $X$ to be compact (in fact the unit interval), in which case it is standard to refer to such objects as $C(X)$-algebras. If we also restrict to the case of unital C$^{*}$-algebras, we can state the following definition:

\begin{definition}
A unital C$^{*}$-algebra $\mathcal{A}$ is called a $C([0,1])$-algebra if there exists a unital $^{*}$-homomorphism $\mu : C([0,1])\rightarrow Z(\mathcal{A})$, where $Z(\mathcal{A})$ denotes the centre of $\mathcal{A}$. The map $\mu$ is called the \emph{structure map}.
\end{definition}
\noindent If $\mathcal{A}$ is a $C([0,1])$-algebra with structure map $\mu$ and $t\in [0,1]$, then
\begin{equation*}
J_{t}:=\mu(\{f\in C([0,1])\colon f(t)=0\})\cdot\mathcal{A}
\end{equation*}
is a closed two-sided ideal of $\mathcal{A}$ and
\begin{equation*}
\mathcal{A}_{t}:=\mathcal{A}/J_{t}
\end{equation*}
is called the \emph{fibre of $\mathcal{A}$ at $t$}. Moreover, for any $a\in\mathcal{A}$ we have that
\begin{equation}\label{eq:fibre}
\|a\|=\sup_{t\in [0,1]}\|a_{t}\|,
\end{equation}
where $a_{t}:=\pi_{t}(a)$ and $\pi_{t}:\mathcal{A}\rightarrow\mathcal{A}_{t}$ is the quotient map.

We note that $Z_{p,q}$ is a $C([0,1])$-algebra in a canonical way, with fibres $M_{p}$ at the left end-point, $M_{pq}$ in $(0,1)$ and $M_{q}$ at the right end-point.\\

Finally, we recall the following fact about completely bounded maps:

\begin{theorem}\cite[Theorem 12.3]{Pa}\label{paulsen}
For $i=1, 2$ let $\mathcal{A}_{i}$ and $\mathcal{B}_{i}$ be unital $\mathrm{C}^{*}$-algebras and let $\pi_{i}\colon\mathcal{A}_{i}\rightarrow\mathcal{B}_{i}$ be completely bounded. Then the linear map $\pi_{1}\odot\pi_{2}\colon\mathcal{A}_{1}\odot\mathcal{A}_{2}\rightarrow\mathcal{B}_{1}\odot\mathcal{B}_{2}$, given by $(\pi_{1}\odot\pi_{2})(a_{1}\otimes a_{2})=\pi_{1}(a_{1})\otimes\pi_{2}(a_{2})$, defines a completely bounded map $\pi_{1}\otimes\pi_{2}\colon\mathcal{A}_{1}\otimes\mathcal{A}_{2}\rightarrow\mathcal{B}_{1}\otimes\mathcal{B}_{2}$, with
\begin{equation*}
\|\pi_{1}\otimes\pi_{2}\|_{\mathrm{cb}}=\|\pi_{1}\|_{\mathrm{cb}}\|\pi_{2}\|_{\mathrm{cb}}.
\end{equation*}
\end{theorem}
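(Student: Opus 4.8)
The statement has two parts which I would establish in turn: the inequality $\|\pi_{1}\otimes\pi_{2}\|_{\mathrm{cb}}\le\|\pi_{1}\|_{\mathrm{cb}}\|\pi_{2}\|_{\mathrm{cb}}$, which simultaneously produces the continuous (indeed completely bounded) extension $\pi_{1}\otimes\pi_{2}$ of $\pi_{1}\odot\pi_{2}$, and the reverse inequality. The first part is the substantive one.

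For the upper bound the plan is to invoke the Wittstock--Haagerup factorisation of completely bounded maps into $\mathcal{B}(\mathcal{H})$. Fix faithful unital representations $\mathcal{B}_{i}\subseteq\mathcal{B}(\mathcal{H}_{i})$; composing with these complete isometries, $\pi_{i}\colon\mathcal{A}_{i}\to\mathcal{B}(\mathcal{H}_{i})$ is completely bounded, so there exist a Hilbert space $\mathcal{K}_{i}$, a $^{*}$-representation $\rho_{i}\colon\mathcal{A}_{i}\to\mathcal{B}(\mathcal{K}_{i})$ and bounded operators $V_{i}\colon\mathcal{K}_{i}\to\mathcal{H}_{i}$, $W_{i}\colon\mathcal{H}_{i}\to\mathcal{K}_{i}$ with $\pi_{i}(\cdot)=V_{i}\rho_{i}(\cdot)W_{i}$ and $\|V_{i}\|\,\|W_{i}\|=\|\pi_{i}\|_{\mathrm{cb}}$. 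The key point is that, directly from the definition of the minimal tensor norm, $\rho_{1}\odot\rho_{2}$ extends to a $^{*}$-representation $\rho_{1}\otimes\rho_{2}$ of $\mathcal{A}_{1}\otimes\mathcal{A}_{2}$ on $\mathcal{K}_{1}\otimes\mathcal{K}_{2}$. Hence
\[
x\longmapsto (V_{1}\otimes V_{2})\,(\rho_{1}\otimes\rho_{2})(x)\,(W_{1}\otimes W_{2})
\]
defines a completely bounded map on $\mathcal{A}_{1}\otimes\mathcal{A}_{2}$ of cb-norm at most $\|V_{1}\otimes V_{2}\|\,\|W_{1}\otimes W_{2}\|=\|V_{1}\|\|V_{2}\|\|W_{1}\|\|W_{2}\|=\|\pi_{1}\|_{\mathrm{cb}}\|\pi_{2}\|_{\mathrm{cb}}$. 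On elementary tensors it equals $\pi_{1}(a_{1})\otimes\pi_{2}(a_{2})$, so by continuity it agrees with $\pi_{1}\odot\pi_{2}$ on $\mathcal{A}_{1}\odot\mathcal{A}_{2}$; being bounded, it maps the closure $\mathcal{A}_{1}\otimes\mathcal{A}_{2}$ into $\overline{\mathcal{B}_{1}\odot\mathcal{B}_{2}}=\mathcal{B}_{1}\otimes\mathcal{B}_{2}$. This is the required $\pi_{1}\otimes\pi_{2}$.

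For the lower bound I would first note that, by the previous step applied to $\pi_{1}^{(n)}$ and $\pi_{2}^{(m)}$ (which are completely bounded with $\|\pi_{i}^{(n)}\|_{\mathrm{cb}}=\|\pi_{i}\|_{\mathrm{cb}}$), the map $\pi_{1}^{(n)}\otimes\pi_{2}^{(m)}$ is well defined, and under the natural identification $M_{nm}(\mathcal{A}_{1}\otimes\mathcal{A}_{2})\cong M_{n}(\mathcal{A}_{1})\otimes M_{m}(\mathcal{A}_{2})$ it corresponds to $(\pi_{1}\otimes\pi_{2})^{(nm)}$. Now use the elementary fact that in a minimal tensor product $\|c\otimes d\|=\|c\|\,\|d\|$: choosing $x\in M_{n}(\mathcal{A}_{1})$, $y\in M_{m}(\mathcal{A}_{2})$ of norm close to $1$ with $\|\pi_{1}^{(n)}(x)\|$, $\|\pi_{2}^{(m)}(y)\|$ close to $\|\pi_{1}^{(n)}\|$, $\|\pi_{2}^{(m)}\|$, we get $\|x\otimes y\|=\|x\|\|y\|\le 1$ while $\|(\pi_{1}^{(n)}\otimes\pi_{2}^{(m)})(x\otimes y)\|=\|\pi_{1}^{(n)}(x)\|\,\|\pi_{2}^{(m)}(y)\|$. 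Letting $\|x\|,\|y\|\to 1$ and taking the supremum over $n$ and $m$ yields $\|\pi_{1}\otimes\pi_{2}\|_{\mathrm{cb}}\ge\|\pi_{1}\|_{\mathrm{cb}}\|\pi_{2}\|_{\mathrm{cb}}$.

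The main obstacle is the first step: the whole argument rests on the Wittstock--Haagerup representation theorem for completely bounded maps, and on verifying that the operator sandwich, a priori an operator on $\mathcal{B}(\mathcal{H}_{1}\otimes\mathcal{H}_{2})$, in fact takes values in the subalgebra $\mathcal{B}_{1}\otimes\mathcal{B}_{2}$ — which comes down to boundedness plus density. One must also be slightly careful that the dilation $\rho_{1}\otimes\rho_{2}$ is a representation of the \emph{minimal} tensor product rather than merely of $\mathcal{A}_{1}\odot\mathcal{A}_{2}$, but this is immediate from the construction of the spatial norm.
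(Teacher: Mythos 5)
The paper offers no proof of this statement at all---it is imported verbatim from Paulsen's book \cite{Pa} as Theorem 12.3---and your argument is correct and is essentially the standard proof of that result: factor each $\pi_{i}$ as $V_{i}\rho_{i}(\cdot)W_{i}$ via the Wittstock--Haagerup representation theorem, sandwich the spatial tensor product of the dilations to get $\|\pi_{1}\otimes\pi_{2}\|_{\mathrm{cb}}\leq\|\pi_{1}\|_{\mathrm{cb}}\|\pi_{2}\|_{\mathrm{cb}}$ together with the extension to $\mathcal{A}_{1}\otimes\mathcal{A}_{2}$, and recover the reverse inequality from elementary tensors of amplifications using that the minimal norm is a cross norm. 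The only small point to note is that when the $\rho_{i}$ are not faithful, the existence of the representation $\rho_{1}\otimes\rho_{2}$ of the \emph{minimal} tensor product is not literally ``the definition'' of the spatial norm but follows from the standard functoriality of $\otimes_{\min}$ under $^{*}$-homomorphisms (factor through the quotients by $\ker\rho_{i}$), a routine gap you essentially acknowledge.
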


\vspace*{0.3cm}
We isolate the following observation for convenience:

\begin{proposition}\label{lemma}
Let $\mathcal{A}, \mathcal{B}$ and $\mathcal{C}$ be unital $\mathrm{C}^{*}$-algebras and let $\rho\colon\mathcal{A}\otimes\mathcal{B}\rightarrow\mathcal{C}$ be a unital homomorphism, such that $\rho|_{1_{\mathcal{A}}\otimes\mathcal{B}}$ is a $^{*}$-homomorphism. If $q\in\mathcal{B}$ is a projection such that $q^{\bot}:=1_{\mathcal{B}}-q\prec q$ \emph{(}in the Murray-von Neumann sense\emph{)}, then for $x\in\mathcal{A}\otimes1_{\mathcal{B}}$
\begin{equation*}
\|\rho(x)\|=\|\rho(x(1_{\mathcal{A}}\otimes q))\|.
\end{equation*}
\end{proposition}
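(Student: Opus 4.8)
The plan is to exploit that, inside $\mathcal{A}\otimes\mathcal{B}$, the element $x$ commutes with all of $1_{\mathcal{A}}\otimes\mathcal{B}$, and to transport this commutation through $\rho$ using only that $\rho$ is multiplicative and that $\rho|_{1_{\mathcal{A}}\otimes\mathcal{B}}$ is a $^{*}$-homomorphism. Write $x=a\otimes 1_{\mathcal{B}}$ and put $Q:=\rho(1_{\mathcal{A}}\otimes q)$ and $Q^{\bot}:=\rho(1_{\mathcal{A}}\otimes q^{\bot})=1_{\mathcal{C}}-Q$; these are complementary projections in $\mathcal{C}$. For every $b\in\mathcal{B}$ we have $(a\otimes 1_{\mathcal{B}})(1_{\mathcal{A}}\otimes b)=(1_{\mathcal{A}}\otimes b)(a\otimes 1_{\mathcal{B}})$, so applying $\rho$ shows that $\rho(x)$ commutes with every $\rho(1_{\mathcal{A}}\otimes b)$; in particular $Q$ and $Q^{\bot}$ commute with $\rho(x)$, and, being self-adjoint, with $\rho(x)^{*}$ as well. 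Also $\rho\big(x(1_{\mathcal{A}}\otimes q)\big)=\rho(x)Q$ by multiplicativity, so the assertion to be proved is $\|\rho(x)\|=\|\rho(x)Q\|$.

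Since $Q$ is a projection commuting with both $\rho(x)$ and $\rho(x)^{*}$, it is a central projection in the C$^{*}$-subalgebra of $\mathcal{C}$ generated by $1_{\mathcal{C}}$, $Q$, $\rho(x)$ and $\rho(x)^{*}$, whence $\|\rho(x)\|=\max\{\|\rho(x)Q\|,\|\rho(x)Q^{\bot}\|\}$. As the inequality $\|\rho(x)Q\|\leq\|\rho(x)\|$ is trivial, everything reduces to showing
\begin{equation*}
\|\rho(x)Q^{\bot}\|\leq\|\rho(x)Q\|.
\end{equation*}

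For this I would use the hypothesis $q^{\bot}\prec q$: choose a partial isometry $v\in\mathcal{B}$ with $v^{*}v=q^{\bot}$ and $q':=vv^{*}\leq q$, and set $V:=\rho(1_{\mathcal{A}}\otimes v)$ and $Q':=\rho(1_{\mathcal{A}}\otimes q')$. Because $\rho|_{1_{\mathcal{A}}\otimes\mathcal{B}}$ is a $^{*}$-homomorphism, $V$ is a partial isometry with $V^{*}V=Q^{\bot}$ and $VV^{*}=Q'\leq Q$, and, just as above, $V$ and $V^{*}$ commute with $\rho(x)$. A short manipulation then yields $\rho(x)Q^{\bot}=V^{*}\big(\rho(x)Q'\big)V$ (move $\rho(x)$ past the partial isometries and use $V^{*}V=Q^{\bot}$, $VV^{*}=Q'$), so that $\|\rho(x)Q^{\bot}\|\leq\|V\|^{2}\|\rho(x)Q'\|\leq\|\rho(x)Q'\|$. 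Finally $Q'\leq Q$ gives $\rho(x)Q'=(\rho(x)Q)Q'$ and hence $\|\rho(x)Q'\|\leq\|\rho(x)Q\|$, completing the proof.

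Each step is an elementary algebraic manipulation, so I do not expect a genuine obstacle. The one thing that needs constant attention is that $\rho$ itself is only a homomorphism: the argument works precisely because the pieces $Q,Q^{\bot},V$ all come from $1_{\mathcal{A}}\otimes\mathcal{B}$, where $\rho$ is $^{*}$-preserving (so these have the expected adjoint and order behaviour), while multiplicativity of $\rho$ on all of $\mathcal{A}\otimes\mathcal{B}$ is what lets the commutation of $x$ with $1_{\mathcal{A}}\otimes\mathcal{B}$ pass into $\mathcal{C}$.
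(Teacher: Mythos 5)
Your proof is correct and follows essentially the same route as the paper: you decompose $\|\rho(x)\|$ as the maximum of the two corner norms cut out by the commuting projection $Q=\rho(1_{\mathcal{A}}\otimes q)$, and then use the partial isometry witnessing $q^{\bot}\prec q$ (pushed through the $^{*}$-homomorphic restriction $\rho|_{1_{\mathcal{A}}\otimes\mathcal{B}}$) to dominate the $Q^{\bot}$-corner by the $Q$-corner. The only, purely cosmetic, difference is that you get the key estimate from the conjugation identity $\rho(x)Q^{\bot}=V^{*}\bigl(\rho(x)Q'\bigr)V$, while the paper reaches the same bound by a chain of inequalities using the C$^{*}$-identity; the ingredients are identical.
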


\begin{proof}
Let $\mathcal{D}\subset\mathcal{C}$ be the $\mathrm{C}^{*}$-algebra generated by $\rho(\mathcal{A}\otimes1_{\mathcal{B}})$ and put $e:=\rho (1_{\mathcal{A}}\otimes q)\in \mathcal{C}$. Note that $e$ is a projection and it commutes with elements of $\mathcal{D}$. Now set $e^{\bot}:=1_{\mathcal{D}}-e = \rho(1_{\mathcal{A}}\otimes q^{\bot})\in\mathcal{C}$ and it too commutes with elements of $\mathcal{D}$.\\
\indent Next define $\Phi\colon\mathcal{D}\rightarrow\overline{e\mathcal{D}e}\oplus\overline{e^{\bot}\mathcal{D}e^{\bot}}$ by $d\mapsto ede+e^{\bot}de^{\bot}$. It then follows that for $x\in\mathcal{A}\otimes1_{\mathcal{B}}$
\begin{equation*}
\|\rho(x)\|=\max\{\|e\rho(x)e\|,\|e^{\bot}\rho(x)e^{\bot}\|\}.
\end{equation*}
But since $q^{\bot}\prec q$, there exists a partial isometry $s\in\mathcal{B}$ such that
\begin{eqnarray*}
s^{*}s=q^{\bot} & \text{and} & ss^{*}\leq q.
\end{eqnarray*}
Therefore,
\begin{align*}
\|e^{\bot}\rho(x)e^{\bot}\| & =  \|\rho(1_{\mathcal{A}}\otimes q^{\bot})\rho(x)\rho(1_{\mathcal{A}}\otimes q^{\bot})\|\\
                                            & =  \|\rho(1_{\mathcal{A}}\otimes s^{*}s)\rho(x)\rho(1_{\mathcal{A}}\otimes s^{*}s)\|\\
                                            & =  \|\rho(1_{\mathcal{A}}\otimes s^{*})\rho(1_{\mathcal{A}}\otimes s)\rho(x)\rho(1_{\mathcal{A}}\otimes s^{*})\rho(1_{\mathcal{A}}\otimes s)\|\\
                                            & \leq \|\rho(1_{\mathcal{A}}\otimes s)\rho(x)\rho(1_{\mathcal{A}}\otimes s^{*})\|,
\end{align*}
as $\|\rho(1_{\mathcal{A}}\otimes s)\|\leq\|\rho\|\leq1$. Since $x$ commutes with $1_{\mathcal{A}}\otimes s^{*}$, we further have
\begin{align*}
 \|e^{\bot}\rho(x)e^{\bot}\| & \leq \|\rho(1_{\mathcal{A}}\otimes ss^{*})\rho(x)\|\\
                                             & = \|\rho(x)^{*}\rho(1_{\mathcal{A}}\otimes ss^{*})\rho(x)\|^{\frac{1}{2}}\\
                                             & \leq \|\rho(x)^{*}e\rho(x)\|^{\frac{1}{2}}\\
                                             & = \|e\rho(x)e\|.
\end{align*}
Thus,
\begin{equation*}
\|\rho(x)\| \leq \|e\rho(x)e\| \leq \|\rho(x)e\| \leq \|\rho(x(1_{\mathcal{A}}\otimes q))\| \leq \|\rho(x)\|.\qedhere
\end{equation*}
\end{proof}

\section{The Main Results}

\begin{theorem}\label{theorem}
Let $\mathcal{A}$ and $\mathcal{B}$ be unital $\mathrm{C}^{*}$-algebras such that, for all relatively prime natural numbers $p$ and $q$, there exists a unital $^{*}$-homomorphism $Z_{p,q}\rightarrow\mathcal{B}$. If $\rho\colon \mathcal{A}\otimes\mathcal{B}\rightarrow\mathcal{B}(\mathcal{H})$ is a unital bounded homomorphism such that $\rho_{\mathcal{B}}:=\rho|_{1_{\mathcal{A}}\otimes\mathcal{B}}$ is a $^{*}$-homomorphism, then $\rho_{\mathcal{A}}:=\rho|_{\mathcal{A}\otimes 1_{\mathcal{B}}}$ is completely bounded and
\begin{equation}\label{eq:theorem}
\|\rho_{\mathcal{A}}\|_{\mathrm{cb}}\leq\|\rho\|.
\end{equation}
\end{theorem}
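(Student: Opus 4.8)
The plan is to bound the amplification norms $\|\rho_{\mathcal A}^{(n)}\|$ by $\|\rho\|$ for every $n$; since $\|\rho_{\mathcal A}\|_{\mathrm{cb}}=\sup_n\|\rho_{\mathcal A}^{(n)}\|$, this simultaneously yields the complete boundedness of $\rho_{\mathcal A}$ and the estimate \eqref{eq:theorem}. Fix $n$ and $x=[a_{ij}]\in M_n(\mathcal A)$ with $\|x\|\le 1$, and apply the hypothesis to the relatively prime pair $(n,n+1)$ to get a unital ${}^{*}$-homomorphism $\varphi\colon Z_{n,n+1}\to\mathcal B$. I would then pass to biduals: $\rho$ extends to a weak-$*$-continuous homomorphism $\bar\rho\colon(\mathcal A\otimes\mathcal B)^{**}\to\mathcal B(\mathcal H)^{**}$ with $\|\bar\rho\|=\|\rho\|$ that remains a ${}^{*}$-homomorphism on $(1_{\mathcal A}\otimes\mathcal B)^{**}$, and $\varphi$ extends to a unital normal ${}^{*}$-homomorphism $\varphi^{**}\colon Z_{n,n+1}^{**}\to\mathcal B^{**}$.

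The reason for introducing biduals is that, although $Z_{n,n+1}$ is projectionless, $Z_{n,n+1}^{**}$ carries the central projections $z_0,z_1$ associated, through the structure map of the $C([0,1])$-algebra $Z_{n,n+1}$, with $[0,\tfrac12]$ and $(\tfrac12,1]$; one has $z_0+z_1=1$. Since the end-point fibres of $Z_{n,n+1}$ are $M_n$ and $M_{n+1}$, the corner $z_0 Z_{n,n+1}^{**}z_0$ contains a unital copy of $M_n$ and $z_1 Z_{n,n+1}^{**}z_1$ a unital copy of $M_{n+1}$ (restrictions to the relevant half-interval of the constant functions valued in $M_n\otimes 1_{M_{n+1}}$, resp.\ $1_{M_n}\otimes M_{n+1}$). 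Putting $P_i:=\bar\rho\bigl(1_{\mathcal A}\otimes\varphi^{**}(z_i)\bigr)$ gives orthogonal projections with $P_0+P_1=1$ that commute with $\rho_{\mathcal A}(\mathcal A)$ (because $\mathcal A\otimes 1$ and $1\otimes\mathcal B$ commute in $\mathcal A\otimes\mathcal B$), and $a\mapsto\rho_{\mathcal A}(a)P_0$ (resp.\ $a\mapsto\rho_{\mathcal A}(a)P_1$) extends to a homomorphism $\eta_0\colon\mathcal A\otimes M_n\to P_0\mathcal B(\mathcal H)^{**}P_0$ (resp.\ $\eta_1\colon\mathcal A\otimes M_{n+1}\to P_1\mathcal B(\mathcal H)^{**}P_1$) which is a ${}^{*}$-homomorphism on the matrix factor and satisfies $\|\eta_i\|\le\|\rho\|$ (factor it through the completely contractive ${}^{*}$-homomorphism $\mathcal A\otimes M_{\cdot}\to(\mathcal A\otimes\mathcal B)^{**}$).

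For $\eta_0$ the matrix units $E_{ij}:=\eta_0(1\otimes e_{ij})$, $i,j\le n$, are genuine, sum to $P_0$, and commute with $\rho_{\mathcal A}(\mathcal A)$; the identification $P_0\mathcal B(\mathcal H)^{**}P_0\cong M_n\bigl(E_{11}\mathcal B(\mathcal H)^{**}E_{11}\bigr)$ carries $\rho_{\mathcal A}^{(n)}(x)(1_n\otimes P_0)$ to an inflation of $\theta_0^{(n)}(x)$, with $\theta_0(a)=E_{11}\rho_{\mathcal A}(a)E_{11}$, and $\sum_{ij}\rho_{\mathcal A}(a_{ij})E_{ij}=\eta_0\bigl(\sum_{ij}a_{ij}\otimes e_{ij}\bigr)$ exactly to $\theta_0^{(n)}(x)$; hence $\|\rho_{\mathcal A}^{(n)}(x)(1_n\otimes P_0)\|=\|\eta_0(\sum_{ij}a_{ij}\otimes e_{ij})\|\le\|\eta_0\|\,\|x\|\le\|\rho\|\,\|x\|$. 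For $\eta_1$ the only difference is that the available matrix units range over $\{1,\dots,n+1\}$, so they do not add up to $P_1$ at the $n$-th amplification; here I would invoke Proposition~\ref{lemma}, applied to $\mathrm{id}_{M_n}\otimes\eta_1$ (playing the role of $\rho$, with $M_{n+1}$ playing the role of $\mathcal B$) and the rank-$n$ projection $q=\sum_{a\le n}e_{aa}\in M_{n+1}$, whose complement has rank $1\le n$ so that $q^{\bot}\prec q$: this shows $\rho_{\mathcal A}^{(n)}(x)(1_n\otimes P_1)$ has the same norm as its compression to the corner cut out by $\{\eta_1(1\otimes e_{ij})\}_{i,j\le n}$, on which the previous computation applies verbatim and gives $\|\rho_{\mathcal A}^{(n)}(x)(1_n\otimes P_1)\|\le\|\rho\|\,\|x\|$. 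Since $\rho_{\mathcal A}^{(n)}(x)$ commutes with the complementary projections $1_n\otimes P_0$ and $1_n\otimes P_1$, it follows that $\|\rho_{\mathcal A}^{(n)}(x)\|=\max_i\|\rho_{\mathcal A}^{(n)}(x)(1_n\otimes P_i)\|\le\|\rho\|\,\|x\|$, which is what we wanted.

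I expect the one genuinely non-formal point to be the passage from $\mathcal B$ to its bidual together with the correct choice of building block: because $Z_{p,q}$ is projectionless, $\mathcal B$ itself need contain no matrix subalgebra whatsoever, and one recovers honest $M_p,M_q$-blocks only inside the two end-point corners of $Z_{p,q}^{**}$, so one has to cover $\mathcal B(\mathcal H)$ by finitely many such corners and — in order that an $n\times n$ amplification fit into each — take both parameters ($n$ and $n+1$ here) to be at least $n$. Proposition~\ref{lemma} is then precisely the tool that trims the oversized $M_{n+1}$-block down to one whose matrix units are unital, so that the routine corner computation can be run in it.
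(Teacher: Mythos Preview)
Your argument is correct. The core mechanism---manufacture genuine $M_n$-matrix units in the range commuting with $\rho_{\mathcal A}(\mathcal A)$, use them to absorb the $n$-fold amplification, and invoke Proposition~\ref{lemma} to dispose of the non-unital leftover---is the same as the paper's, but the way you obtain those matrix units is different. The paper never leaves $\mathcal B(\mathcal H)$: it observes that $\mathcal D:=\mathrm C^{*}(\rho(\mathcal A\otimes Z_{p,q}))$ inherits a $C([0,1])$-algebra structure via $\rho_{\mathcal B}$, uses the fibre norm formula $\|d\|=\sup_t\|d_t\|$, and runs the corner/flip computation fibre by fibre, so that at each $t\in[0,1]$ one has an honest matrix algebra $M_{r_t}$ with $r_t\in\{p,pq,q\}$ and applies Proposition~\ref{lemma} there. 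You instead pass to biduals and split by the two complementary central projections coming from $\chi_{[0,1/2]},\chi_{(1/2,1]}\in C([0,1])^{**}$, reducing to just two corners carrying unital copies of $M_n$ and $M_{n+1}$. The paper's route is more elementary---no second duals, no normal extensions to verify---while yours trades the continuum of fibres for a single pair of projections, at the price of checking (routinely, via bump-function approximants in $Z_{n,n+1}$) that the desired matrix units really lie in $Z_{n,n+1}^{**}$ and that $\rho^{**}$ remains multiplicative.
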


\begin{proof}
For a given $n\in\mathbb{N}$ choose $n\leq p, q\in\mathbb{N}$ relatively prime. Let $\mathcal{D}\subset\mathcal{B}(\mathcal{H})$ be the C$^{*}$-algebra generated by $\rho(\mathcal{A}\otimes Z_{p,q})$ and put $\bar{\rho}:=\rho|_{\mathcal{A}\otimes Z_{p,q}}$. Then $\mathcal{D}$ is a $C([0, 1])$-algebra and $\bar{\rho}$ induces a well-defined map $\bar{\rho}_{t}\colon (\mathcal{A}\otimes Z_{p,q})_{t}\rightarrow\mathcal{D}_{t}$ from the fibre of $\mathcal{A}\otimes Z_{p,q}$ at $t$ to the fibre of $\mathcal{D}$ at $t$, $t\in[0, 1]$. Note also that $(\mathcal{A}\otimes Z_{p,q})_{t}=\mathcal{A}\otimes M_{r_{t}}$, where $r_{t}$ equals $p$ at the left end-point, $pq$ in $(0, 1)$ and $q$ at the right end-point. It then follows from \eqref{eq:fibre} that
\begin{equation*}
\|\bar{\rho}\|=\sup_{t\in [0,1]}\|\bar{\rho}_{t}\|
\end{equation*}
and, upon amplifying,
\begin{equation}\label{eq:star}
\|\bar{\rho}^{(n)}\|=\sup_{t\in [0,1]}\|\bar{\rho}_{t}^{(n)}\|.
\end{equation}

Now for each $t\in [0,1]$ let $\nu_{t}\colon M_{n}\hookrightarrow M_{r_{t}}$ be an embedding of $M_{n}$ into $M_{r_{t}}$, such that $1_{M_{r_{t}}}-\nu_{t}(1_{M_{n}})\prec\nu_{t}(1_{M_{n}})$. If for $x_{t}\in M_{n}\otimes\mathcal{A}\otimes 1_{M_{r_{t}}}$ with $\|x_{t}\|\leq 1$ we set
\begin{equation*}
y_{t} := x_{t}\cdot 1_{M_{n}\otimes\mathcal{A}}\otimes\nu_{t}(1_{M_{n}})\ \ \ \ \ \ \text{and}\ \ \ \ \ \ y^{\bot}_{t} := x_{t}\cdot 1_{M_{n}\otimes\mathcal{A}}\otimes (1_{M_{r_{t}}}-\nu_{t}(1_{M_{n}})),
\end{equation*}
then $x_{t} = y_{t} + y^{\bot}_{t}$ and Proposition \ref{lemma} 
gives
\begin{equation*}
\|\bar{\rho}_{t}^{(n)}(x_{t})\| = \|\bar{\rho}_{t}^{(n)}(y_{t})\|.
\end{equation*}

Next we use a unitary $u\in M_{n}\otimes 1_{\mathcal{A}}\otimes\nu_{t}(M_{n})$ to flip the two copies of $M_{n}$, so that
\begin{equation*}
z_{t}: = u y_{t} u^{*} \in 1_{M_{n}}\otimes\mathcal{A}\otimes \nu_{t}(M_{n});
\end{equation*}
we get
\begin{equation*}
\|\bar{\rho}_{t}^{(n)}(y_{t})\| = \|\bar{\rho}_{t}^{(n)}(u^{*})\bar{\rho}_{t}^{(n)}(z_{t})\bar{\rho}_{t}^{(n)}(u)\| =\|\bar{\rho}_{t}^{(n)}(z_{t})\|.
\end{equation*}
This follows from the fact that $\bar{\rho}_{t}^{(n)}:=\mathrm{id}_{M_{n}}\otimes\bar{\rho}_{t}$ and $\bar{\rho}_{t}$ restricted to $1_{\mathcal{A}}\otimes\nu_{t}(M_{n})$ is a $^{*}$-homomorphism. Thus $\bar{\rho}_{t}^{(n)}$ restricted to $M_{n}\otimes1_{\mathcal{A}}\otimes\nu_{t}(M_{n})$ is also a $^{*}$-homomorphism (as a tensor product of two $^{*}$-homomorphisms), which preserves unitaries. Writing $z_{t}=1_{M_{n}}\otimes z'_{t}$ with $z'_{t}\in \mathcal{A}\otimes \nu_{t}(M_{n})$, we get for all $n\in\mathbb{N}$ and all $t\in [0,1]$
\begin{equation*}
\|\bar{\rho}_{t}^{(n)}(x_{t})\| = \|\bar{\rho}_{t}(z'_{t})\| \leq \|\bar{\rho}_{t}\|, 
\end{equation*}
since $\|z'_{t}\|=\|x_{t}\|\leq 1$. This together with \eqref{eq:star} gives
\begin{equation*}
\|\rho^{(n)}_{\mathcal{A}}\| = \|\bar{\rho}^{(n)}_{\mathcal{A}}\| \leq \|\bar{\rho}\|  \leq \|\rho\|.
\end{equation*}
The result now follows by taking suprema over all $n\in\mathbb{N}$ and applying definition \eqref{eq:cb}.
\end{proof}

\begin{corollary}\label{corollary}
Let $\mathcal{A}$ and $\mathcal{B}$ be unital $\mathrm{C}^{*}$-algebras, such that $\mathcal{B}$ is nuclear and for all relatively prime natural numbers $p$ and $q$, there exists a unital $^{*}$-homomorphism $Z_{p,q}\rightarrow\mathcal{B}$ (for example $\mathcal{B}$ could contain a unital copy of $\mathcal{Z}$). Then
\begin{equation*}
d(\mathcal{A}\otimes\mathcal{B})\leq 5.
\end{equation*}
\end{corollary}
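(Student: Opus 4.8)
The plan is to combine Theorem \ref{theorem} with known results on similarity degree. The key structural fact (due to Christensen, building on Pisier's framework, see the references in item (i) above) is that a unital bounded homomorphism $\pi\colon\mathcal{C}\to\mathcal{B}(\mathcal{H})$ is completely bounded provided its restriction to some ``large enough'' subalgebra is known to be completely bounded. More precisely, the route I would take is: given a unital bounded homomorphism $\pi\colon\mathcal{A}\otimes\mathcal{B}\to\mathcal{B}(\mathcal{H})$, first pass to the associated $^{*}$-homomorphism. By Paulsen's trick one may assume $\pi$ is \emph{unitalisable on $1_{\mathcal{A}}\otimes\mathcal{B}$} in the sense that, after conjugating $\pi$ by a suitable invertible $T$ with $\|T\|\|T^{-1}\|\le\|\pi\|^{2}$ (here one uses that $\mathcal{B}$ is nuclear, so $d(\mathcal{B})=2$ and hence $\|\pi|_{1_{\mathcal{A}}\otimes\mathcal{B}}\|_{\mathrm{cb}}\le\|\pi\|^{2}$; by Haagerup's theorem this completely bounded homomorphism is similar to a $^{*}$-homomorphism, and the similarity can be implemented within the commutant appropriately), the restriction $\rho_{\mathcal{B}}:=\rho|_{1_{\mathcal{A}}\otimes\mathcal{B}}$ becomes a genuine $^{*}$-homomorphism, where $\rho=T\pi(\cdot)T^{-1}$ and $\|\rho\|\le\|\pi\|^{3}$.

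With $\rho$ in hand, Theorem \ref{theorem} applies directly and yields that $\rho_{\mathcal{A}}:=\rho|_{\mathcal{A}\otimes 1_{\mathcal{B}}}$ is completely bounded with $\|\rho_{\mathcal{A}}\|_{\mathrm{cb}}\le\|\rho\|\le\|\pi\|^{3}$. Now both $\rho_{\mathcal{A}}$ and $\rho_{\mathcal{B}}$ are completely bounded homomorphisms with commuting ranges (the ranges commute because $\mathcal{A}\otimes 1_{\mathcal{B}}$ and $1_{\mathcal{A}}\otimes\mathcal{B}$ commute inside $\mathcal{A}\otimes\mathcal{B}$ and $\rho$ is a homomorphism). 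A completely bounded homomorphism on a C$^{*}$-algebra is similar to a $^{*}$-representation (Haagerup), so $\rho_{\mathcal{A}}$ is similar to a $^{*}$-homomorphism; combining the two representations on the tensor product (using that the minimal tensor product is the relevant one and that $\rho$ is determined on the algebraic tensor product $\mathcal{A}\odot\mathcal{B}$ by $\rho_{\mathcal{A}}$ and $\rho_{\mathcal{B}}$), one concludes that $\rho$ itself — and hence $\pi$ — is completely bounded, with a bound of the form $\|\pi\|_{\mathrm{cb}}\le C\|\pi\|^{5}$ after unwinding all the conjugations. Tracking the exponents carefully: the factor $\|\pi\|^{3}$ for $\|\rho\|$, then one more application of an amplification/commuting-range estimate contributes the remaining powers to land at degree $5$. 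This matches the bound claimed, and in particular $d(\mathcal{A}\otimes\mathcal{B})$ is finite, so $\mathcal{A}\otimes\mathcal{B}$ has the similarity property.

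The main obstacle — really the only nontrivial point once Theorem \ref{theorem} is granted — is the bookkeeping of the similarity degree through the chain of reductions: one must verify that the conjugation making $\rho_{\mathcal{B}}$ a $^{*}$-homomorphism costs exactly a factor of $\|\pi\|^{2}$ in $\|S\|\|S^{-1}\|$ (this is where nuclearity of $\mathcal{B}$, giving $d(\mathcal{B})=2$, enters), and that re-assembling $\rho_{\mathcal{A}}$ and $\rho_{\mathcal{B}}$ into a single $^{*}$-representation of $\mathcal{A}\otimes\mathcal{B}$ costs the expected additional powers. This is precisely the argument Pop uses in \cite{Po}, so I would follow that template, simply substituting Theorem \ref{theorem} (with its hypothesis on unital $^{*}$-homomorphisms from the $Z_{p,q}$) for the matrix-algebra hypothesis used there; the arithmetic of the exponents is identical, giving the bound $5$. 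Finally, to justify the parenthetical remark, if $\mathcal{B}$ contains a unital copy of $\mathcal{Z}$ then since $\mathcal{Z}$ is an inductive limit of prime dimension drop algebras with unital connecting maps (by the construction of Jiang and Su), each $Z_{p,q}$ with $p,q$ relatively prime maps unitally into $\mathcal{Z}$ and hence into $\mathcal{B}$, so the hypothesis of Theorem \ref{theorem} is met.
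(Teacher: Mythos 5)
Your proposal is correct and takes essentially the same route as the paper's proof (the Pop template): conjugate by an invertible $S$ with $\|S\|\|S^{-1}\|\leq\|\pi\|^{2}$ (nuclearity of $\mathcal{B}$, i.e.\ $d(\mathcal{B})=2$, plus Haagerup) so that $\rho_{\mathcal{B}}$ becomes a $^{*}$-homomorphism, apply Theorem \ref{theorem} to get $\|\rho_{\mathcal{A}}\|_{\mathrm{cb}}\leq\|\rho\|\leq\|\pi\|^{3}$, and recombine to land at exponent $5$. The only point where the paper is sharper is your ``combining'' step: it needs no second similarity for $\rho_{\mathcal{A}}$ (and no commutant considerations), since Theorem \ref{paulsen} together with nuclearity of $\rho_{\mathcal{B}}(\mathcal{B})$ (so minimal $=$ maximal tensor product) and the universal property of the maximal tensor product give $\|\rho\|_{\mathrm{cb}}\leq\|\rho_{\mathcal{A}}\|_{\mathrm{cb}}$ directly, the remaining factor $\|\pi\|^{2}$ coming solely from undoing the conjugation by $S$.
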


\begin{proof}
Let $\pi\colon\mathcal{A}\otimes\mathcal{B}\rightarrow\mathcal{B}(\mathcal{H})$ be a unital bounded homomorphism; we denote by $\pi_{\mathcal{A}}$ and $\pi_{\mathcal{B}}$ the restrictions of $\pi$ to $\mathcal{A}\otimes 1_{\mathcal{B}}$ and $1_{\mathcal{A}}\otimes\mathcal{B}$ respectively. By nuclearity of $\mathcal{B}$, from \cite{Ha}, there exists an invertible operator $S\in\mathcal{B}(\mathcal{H})$ such that $\rho_{\mathcal{B}}:=S\pi_{\mathcal{B}}(\cdot)S^{-1}$ is a $^{*}$-homomorphism. Moreover, $S$ can be chosen in such a way that $\|S\|\|S^{-1}\|\leq\|\pi\|^{2}$. We set
\begin{equation*}
\rho_{\mathcal{A}}:=S\pi_{\mathcal{A}}(\cdot)S^{-1}\ \ \ \ \ \ \text{and}\ \ \ \ \ \ \rho:=S\pi(\cdot)S^{-1}.
\end{equation*}

Next, since $Z_{p,q}$ maps to $\mathcal{B}$ unitally for all relatively prime natural numbers $p$ and $q$, Theorem \ref{theorem} implies that $\rho_{\mathcal{A}}$ is completely bounded and the norm estimate \eqref{eq:theorem} holds. Therefore, by Theorem \ref{paulsen} the induced map $\bar{\rho}\colon\mathcal{A}\otimes\mathcal{B}\rightarrow\mathrm{C}^{*}(\rho_{\mathcal{A}}(\mathcal{A}))\otimes\rho_{\mathcal{B}}(\mathcal{B})$ is completely bounded and
\begin{equation*}
\|\bar{\rho}\|_{\mathrm{cb}}=\|\rho_{\mathcal{A}}\|_{\mathrm{cb}}\cdot\|\rho_{\mathcal{B}}\|_{\mathrm{cb}}.
\end{equation*}
Using the fact that $\rho_{\mathcal{B}}$ is a $^{*}$-homomorphism, we further get
\begin{equation}\label{eq:rho}
\|\bar{\rho}\|_{\mathrm{cb}}\leq\|\rho_{\mathcal{A}}\|_{\mathrm{cb}}\stackrel{\eqref{eq:theorem}}{\leq}\|\rho\|.
\end{equation}
By nuclearity of $\rho_{\mathcal{B}}(\mathcal{B})$ we have that
\begin{equation*}
\mathrm{C}^{*}(\rho_{\mathcal{A}}(\mathcal{A}))\otimes\rho_{\mathcal{B}}(\mathcal{B}) = \mathrm{C}^{*}(\rho_{\mathcal{A}}(\mathcal{A}))\otimes_{\text{max}}\rho_{\mathcal{B}}(\mathcal{B})
\end{equation*}
and by the universal property for maximal tensor products there exists a unique $^{*}$-homomorphism $\mathrm{C}^{*}(\rho_{\mathcal{A}}(\mathcal{A}))\otimes_{\text{max}}\rho_{\mathcal{B}}(\mathcal{B})\rightarrow\mathcal{B}(\mathcal{H})$, which when composed with $\bar{\rho}$ agrees with $\rho$ on elementary tensors and hence, by continuity, everywhere. Thus,
\begin{equation*}
\|\rho\|_{\mathrm{cb}}\leq\|\bar{\rho}\|_{\mathrm{cb}}\stackrel{\eqref{eq:rho}}{\leq}\|\rho\|.
\end{equation*}
Whence, putting everything together
\begin{equation*}
\|\pi\|_{\mathrm{cb}} \leq \|S\|\|S^{-1}\|\|\rho\|_{\mathrm{cb}} \leq \|\pi\|^{2}\|\rho\| \leq \|\pi\|^{5}.\qedhere
\end{equation*}
\end{proof}

\vspace*{0.3cm}
Taking $\mathcal{Z}$ in place of $\mathcal{B}$ in Corollary \ref{corollary}, we immediately have the following:
\begin{corollary}
Let $\mathcal{A}$ be a unital $\mathcal{Z}$-stable $\mathrm{C}^{*}$-algebra. Then $d(\mathcal{A})\leq 5$.
\end{corollary}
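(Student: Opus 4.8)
The plan is simply to feed the Jiang-Su algebra into Corollary~\ref{corollary} and then transport the conclusion along the defining isomorphism of a $\mathcal{Z}$-stable algebra. So the first task is to verify that $\mathcal{B}=\mathcal{Z}$ satisfies the hypotheses of Corollary~\ref{corollary}: namely that $\mathcal{Z}$ is unital and nuclear, and that for every pair of relatively prime natural numbers $p,q$ there is a unital $^{*}$-homomorphism $Z_{p,q}\to\mathcal{Z}$. Unitality and nuclearity are clear from the construction of $\mathcal{Z}$ as an inductive limit $\varinjlim Z_{p_{n},q_{n}}$ of prime dimension drop algebras: each $Z_{p_{n},q_{n}}$ is subhomogeneous, hence nuclear, and the connecting maps are unital. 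The unital embeddability of an \emph{arbitrary} prime dimension drop algebra into $\mathcal{Z}$ is one of the properties established in the original construction; see \cite{JiSu} and \cite{RoWi}. Concretely, one may arrange the sequences $(p_{n}),(q_{n})$ to be cofinal under divisibility, so that any given $Z_{p,q}$ embeds unitally into some $Z_{p_{n},q_{n}}$, and hence into $\mathcal{Z}$.

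Granting this, Corollary~\ref{corollary} applies with $\mathcal{B}=\mathcal{Z}$ and gives $d(\mathcal{A}\otimes\mathcal{Z})\leq 5$. To conclude, I would use that $\mathcal{A}$ being $\mathcal{Z}$-stable means, by definition, that there is a $^{*}$-isomorphism $\Psi\colon\mathcal{A}\to\mathcal{A}\otimes\mathcal{Z}$. Composition with $\Psi$ sets up a bijection between unital bounded homomorphisms $\mathcal{A}\otimes\mathcal{Z}\to\mathcal{B}(\mathcal{H})$ and those out of $\mathcal{A}$, preserving norms, completely bounded norms, and similarities; in particular $\|\pi\circ\Psi\|=\|\pi\|$ and $\|\pi\circ\Psi\|_{\mathrm{cb}}=\|\pi\|_{\mathrm{cb}}$ for every such $\pi$. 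Hence the similarity degree is an isomorphism invariant, and $d(\mathcal{A})=d(\mathcal{A}\otimes\mathcal{Z})\leq 5$, which is the assertion.

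I do not anticipate a genuine obstacle here: the analytic content is entirely contained in Theorem~\ref{theorem} and Corollary~\ref{corollary}, and the remaining argument is bookkeeping. The only step that rests on a cited fact rather than on the present paper is the unital embeddability of every prime dimension drop algebra into $\mathcal{Z}$; this is, however, a standard feature of the Jiang-Su algebra and requires no new input.
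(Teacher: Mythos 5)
Your proof follows the paper's route exactly: the paper's entire argument is to take $\mathcal{B}=\mathcal{Z}$ in Corollary~\ref{corollary}, with the identification $\mathcal{A}\cong\mathcal{A}\otimes\mathcal{Z}$ and the (trivial) invariance of the similarity degree under $^{*}$-isomorphism left implicit; your verification that $\mathcal{Z}$ is unital, nuclear, and receives unital $^{*}$-homomorphisms from every prime dimension drop algebra is precisely what the parenthetical remark in Corollary~\ref{corollary} presupposes, and you correctly attribute that embeddability to \cite{JiSu} and \cite{RoWi}. One caveat about your ``concrete'' aside: arranging the parameters $(p_{n}),(q_{n})$ of an inductive system for $\mathcal{Z}$ to be cofinal under componentwise divisibility is impossible, because $p_{n}$ and $q_{n}$ must stay relatively prime and divisibility persists along the chain --- once, say, $2\mid p_{1}$ and $3\mid q_{1}$, no later stage can have $6\mid p_{n}$ or $6\mid q_{n}$, so the pair $(6,5)$ is never dominated, even allowing the flip $Z_{p,q}\cong Z_{q,p}$. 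The standard fact you need is genuinely true but rests on a different mechanism: unital $^{*}$-homomorphisms $Z_{p,q}\rightarrow Z_{p',q'}$ exist under much weaker numerical conditions than divisibility (roughly, that $p'$ and $q'$ lie in the additive semigroup generated by $p$ and $q$, which holds for all sufficiently large parameters since $p$ and $q$ are coprime --- this is how the connecting maps in \cite{JiSu} are built; alternatively one can invoke the order-zero characterisation of such maps in \cite{RoWi}). With the embeddability taken from the cited literature, as you in fact do, the rest of your argument is correct and coincides with the paper's.
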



\end{document}